\numberwithin{equation}{section}
\def \la{\lambda}
\def \ph{\varphi}
\def \oo{\omega}
\def \G{\Gamma}
\def \D{\Delta}
\def \O{\Omega}
\def \N{\mathbb{N}}
\def \R{\mathbb{R}}
\def \Z{\mathbb{Z}}
\def\n{\nabla}
\def\dd{\partial}
\def\1{1\!\!\!\!1}
\def\ran{\operatorname{Ran}}
\theoremstyle{plain}
\newtheorem{theorem}{\bf Theorem}[section]
\newtheorem{lemma}[theorem]{\bf Lemma}
\theoremstyle{definition}
\newtheorem{defi}[theorem]{\bf Definition}
\theoremstyle{remark}
\newtheorem{rem}[theorem]{\bf Remark}
\renewcommand{\le}{\leqslant}
\renewcommand{\ge}{\geqslant}
\renewcommand{\qed}{\vrule height7pt width5pt depth0pt}
\title{On the P\'olya conjecture for the Neumann problem in tiling sets}
\author{N.~D.~Filonov}
\date{}
\begin{document}
\maketitle

\begin{abstract}
In 1954, G.~P\'olya conjectured that the counting function of the eigenvalues 
of the Laplace operator of Dirichlet (resp. Neumann) boundary value problem
in a bounded set $\O\subset\R^d$ is lesser (resp. greater) than $C_W |\O| \la^{d/2}$.
Here $\la$ is the spectral parameter, and $C_W$ is the constant in the Weyl asymptotics.
In 1961, P\'olya proved this conjecture for tiling sets in the Dirichlet case,
and for tiling sets under some additional restrictions for the Neumann case.
We prove the P\'olya conjecture in the Neumann case for all tiling sets.
\footnote{Keywords: P\'olya conjecture, Neumann problem, tiling sets.}
\end{abstract}

\section{Formulation of the result}
Let $\O \subset \R^d$ be a bounded open set.
Assume that the embedding $W_2^1 (\O) \subset L_2 (\O)$ is compact;
here $W_2^1 (\O)$ is the Sobolev space.
Let us consider the Laplace operator of the Dirichlet problem (resp. of the Neumann problem)
defined in the space $L_2 (\O)$ via the quadratic form
$$
\int_\O |\n\ph (x)|^2 dx, \qquad \ph \in \mathring W_2^1 (\O) \ \ (\text{resp.}\ \ph \in W_2^1 (\O)) .
$$
Here $\mathring W_2^1 (\O)$ is the closure of the set $C_0^\infty (\O)$ in the space $W_2^1 (\O)$.
We denote the eigenvalues of the Laplace operator of the Dirichlet problem by
\begin{equation*}
0 < \la_1 \le \la_2 \le \la_3 \le \dots, \qquad \la_k \to + \infty, 
\end{equation*}
and of the Neumann problem by
\begin{equation*}
0 = \mu_1 \le \mu_2 \le \mu_3 \le \dots, \qquad \mu_k \to + \infty,
\end{equation*}
taking multiplicity into account.
Note that if the set $\O$ is connected then $\la_2 > \la_1$ and $\mu_2 > 0$.
Note also that we need the compactness of the embedding $W_2^1 (\O) \subset L_2 (\O)$
for the case of the Neumann boundary conditions only. 
In the case of the Dirichlet boundary problem we need no such a condition,
because the embedding $\mathring W_2^1 (\O) \subset L_2 (\O)$ is compact for any bounded set $\O$.

The P\'olya conjecture says that
\begin{equation}
\label{1}
\la_k (\O) \ge \frac{4\pi^2}{\left(\oo_d |\O|\right)^{2/d}}\ k^{2/d} 
\end{equation}
and
\begin{equation}
\label{2}
\mu_{k+1} (\O) \le \frac{4\pi^2}{\left(\oo_d |\O|\right)^{2/d}}\ k^{2/d} 
\end{equation}
for all open bounded sets $\O$ and for all natural numbers $k$.
Here $\oo_d$ is the volume of the unit ball in $\R^d$,
and $|\O|$ is the measure of the set $\O$.
In terms of counting functions
$$
N_{\cal D} (\la, \O) := \# \{ k: \la_k < \la\}, \qquad
N_{\cal N} (\la, \O) := \# \{ k: \mu_k < \la\} ,
$$
the inequalities \eqref{1}, \eqref{2} are equivalent to 
\begin{equation}
\label{3}
N_{\cal D} (\la, \O) \le \frac{\oo_d |\O|}{(2\pi)^d}\ \la^{d/2},
\end{equation}
\begin{equation}
\label{4}
N_{\cal N} (\la, \O) \ge \frac{\oo_d |\O|}{(2\pi)^d}\ \la^{d/2}
\end{equation}
for all $\la > 0$.
Note that the formulation of the hypothesis by P\'olya himself in the book \cite{P1}
is slightly different, see comments in the next section.
The generally accepted name of the P\'olya conjecture is fixed for the inequalities \eqref{1}--\eqref{4}.

Later, P\'olya proved the estimate \eqref{1} for tiling sets.

\begin{defi}
\label{d1}
An open set $\O \subset \R^d$ is called tiling if the whole space $\R^d$ can be represented as a union of 
non-intersecting isometric copies $\O_j$ of the initial set $\O$ modulo a set of measure zero,
\begin{equation}
\label{5}
\left| \R^d \setminus \left(\bigcup_{j=1}^\infty\O_j\right)\right| = 0,
\qquad \O_j \cap \O_k = \emptyset \ \ \text{if} \  j \neq k .
\end{equation}
\end{defi}

\begin{rem}
\label{r2}
The shape of a tiling set can be rather complicated.
But in any case the measure of its boundary is zero.
Indeed, the condition \eqref{5} yields
$$
\dd \O_k \cap \O_j = \emptyset \qquad \text{for all} \ \ k, j \in \N,
$$
so
$$
\dd \O_1 \subset \R^d \setminus \bigcup_{j=1}^\infty\O_j.
$$
Thus, $|\dd\O| = 0$.
\end{rem}

\begin{theorem}[\cite{P2}]
\label{t3}
Let $\O$ be an open bounded tiling set in $\R^d$.
Then the inequalities \eqref{1} and \eqref{3} are fulfilled.
\end{theorem}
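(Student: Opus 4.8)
The plan is to deduce the estimate for a general tiling set from the elementary P\'olya bound for a large cube, using the tiling to manufacture a test subspace of the correct dimension. Fix $\la>0$ and set $n:=N_{\cal D}(\la,\O)$; the Dirichlet Laplacian on the bounded set $\O$ has discrete spectrum (the embedding $\mathring W_2^1(\O)\subset L_2(\O)$ is compact), and if $n=0$ there is nothing to prove, so assume $n\ge1$ and let $\la_n(\O)<\la$ denote the largest Dirichlet eigenvalue of $\O$ below $\la$. Choose an orthonormal basis $\ph_1,\dots,\ph_n\in\mathring W_2^1(\O)$ of the span of the corresponding eigenfunctions, so that $\int_\O|\n u|^2\,dx\le\la_n(\O)\int_\O|u|^2\,dx$ for every $u$ in this span. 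Since the Dirichlet Laplacian commutes with isometries, each tile $\O_j$ carries the same $n$ eigenfunctions obeying the same inequality.

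First I would transplant these functions into a large cube. Let $Q_L=(0,L)^d$ and let $N_L$ be the number of tiles $\O_j$ with $\O_j\subset Q_L$. Extending the $n$ eigenfunctions on each such tile by zero gives functions in $\mathring W_2^1(Q_L)$: the zero extension of a $\mathring W_2^1$-function over a subdomain again lies in $\mathring W_2^1$, because it is approximated in $W_2^1$-norm by $C_0^\infty$-functions whose supports are compactly contained in the subdomain, hence in $Q_L$. These functions have pairwise disjoint supports across distinct tiles and are linearly independent within each tile, so their span $V\subset\mathring W_2^1(Q_L)$ has dimension $N_L\,n$; and for $u=\sum_j u_j\in V$ with $u_j$ supported in $\O_j$,
$$
\int_{Q_L}|\n u|^2\,dx=\sum_j\int_{\O_j}|\n u_j|^2\,dx\le\la_n(\O)\sum_j\int_{\O_j}|u_j|^2\,dx=\la_n(\O)\int_{Q_L}|u|^2\,dx .
$$
By the variational principle for the Dirichlet Laplacian on $Q_L$ this yields $\la_{N_L n}(Q_L)\le\la_n(\O)<\la$, hence $N_{\cal D}(\la,Q_L)\ge N_L\,n$.

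It then remains to estimate $N_L$ and $N_{\cal D}(\la,Q_L)$. Since $\O$ is bounded, each $\O_j$ has diameter $\operatorname{diam}\O$, so a tile meeting $Q_L$ but not contained in it lies in the shell $\{x:\operatorname{dist}(x,\dd Q_L)<2\operatorname{diam}\O\}$, whose volume is $O(L^{d-1})$; the tiles being disjoint and covering $Q_L$ up to a null set, this gives $L^d-C_d L^{d-1}\operatorname{diam}\O\le N_L|\O|\le L^d$, so $L^d/N_L\to|\O|$ as $L\to\infty$. For the cube the Dirichlet eigenvalues are $\pi^2|m|^2L^{-2}$, $m\in\N^d$, so $N_{\cal D}(\la,Q_L)$ is the number of points of $\N^d$ lying in the ball of radius $\sqrt\la\,L/\pi$; associating to each such $m$ the unit cube $\prod_i[m_i-1,m_i]\subset B_{\sqrt\la L/\pi}\cap\R^d_+$ and comparing volumes gives $N_{\cal D}(\la,Q_L)\le\oo_d L^d\la^{d/2}/(2\pi)^d$. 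Combining the displays, $N_L\,n\le\oo_d L^d\la^{d/2}/(2\pi)^d$; dividing by $N_L$ and letting $L\to\infty$ produces $n=N_{\cal D}(\la,\O)\le\oo_d|\O|\la^{d/2}/(2\pi)^d$, which is \eqref{3}, and \eqref{3} is equivalent to \eqref{1}. No step requires a hard estimate; the one place demanding care is the bookkeeping for the boundary tiles — that the shell contributes only a lower-order $O(L^{d-1})$ term — which is precisely where boundedness of $\O$ enters, the remaining ingredients being the \emph{disjoint copies} variational construction and the explicit spectrum of a box.
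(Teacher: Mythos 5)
Your argument is correct. Note, however, that the paper does not actually prove Theorem \ref{t3}: it is quoted from P\'olya \cite{P2}, and only the Neumann analogue (Theorem \ref{t8}) is proved in Section 5. What you have written is essentially P\'olya's original argument, and every step checks out: the zero extension of a $\mathring W_2^1(\O_j)$-function to the cube stays in $\mathring W_2^1$ of the cube, the transplanted eigenfunctions give an $N_L n$-dimensional subspace with Rayleigh quotient below $\la$, the lattice-point count for the cube gives the sharp constant $\oo_d L^d\la^{d/2}/(2\pi)^d$ from above, and the boundary tiles contribute only $O(L^{d-1})$ to the volume bookkeeping, so $L^d/N_L\to|\O|$. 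It is worth seeing your proof as the mirror image of the paper's proof of Theorem \ref{t8}: there the inequality of Lemma \ref{l42} runs the other way (the cube's Neumann counting function is bounded \emph{above} by the sum over tiles plus a remainder), the explicit cube spectrum gives a \emph{lower} bound (Lemma \ref{l41}), and — crucially — the trivial zero-extension step of the Dirichlet case has no Neumann counterpart, which is why the paper needs the reflection extension operator of Lemma \ref{l44} and the comparison Lemma \ref{l43} to control the boundary layer. That extension machinery is exactly the new ingredient that lets the Neumann result go beyond P\'olya's ``regularly tiling'' hypothesis; in your Dirichlet setting it is not needed, because discarding the boundary tiles only decreases the test subspace.
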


P\'olya obtained the result for the Neumann boundary problem under more restrictive conditions on the set $\O$.
Let us give some definitions.

\begin{defi}
A lattice in $\R^d$ is a set $\G$ of the following type
$$
\G = \left\{ v = \sum_{j=1}^d n_j v_j, \ \ n_j \in \Z\right\},
$$
where $\{v_1, \dots, v_d\}$ is a basis in $\R^d$.
\end{defi}

\begin{defi}
An open set $\O$ is called translationally tiling if there exists such a lattice $\G$ in $\R^d$ that
$$
\left(\O + v\right) \cap \O = \emptyset \qquad \text{for all} \ \ v \in \G \setminus \{0\},
$$
and
$$
\left| \R^d \setminus \bigcup_{v\in\G} \left(\O + v\right)\right| = 0.
$$
\end{defi}

\begin{defi}
\label{d6}
An open set $\O$ is called regularly tiling if there exist a natural number $l$ and sets
$$
\O_1, \dots, \O_l \quad \text{and} \quad \tilde \O,
$$
such that each set $\O_j$ is isometric to the set $\O$,
$$
\O_j \cap \O_k = \emptyset \ \ \text{if} \  j \neq k,
\qquad \bigcup_{j=1}^l \O_j\subset \tilde \O, 
\qquad \left| \tilde \O \setminus \left(\bigcup_{j=1}^l \O_j\right)\right| = 0,
$$
and the set $\tilde \O$ is translationally tiling.
\end{defi}

In other words, $\O$ is regularly tiling if the corresponding set of sets $\{\O_j\}_{j\in\N}$
in the Definition \ref{d1} is invariant under the shifts on vectors of some lattice.

\begin{theorem}[\cite{P2}]
\label{t7}
Let $\O$ be a polyhedron in $\R^d$.
Assume that $\O$ is a regularly tiling set.
Then the inequalities \eqref{2} and \eqref{4} are fulfilled.
\end{theorem}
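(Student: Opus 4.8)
The plan is to deduce \eqref{4}, which is equivalent to \eqref{2}, by comparing $\O$ with a large cube $Q_L:=(0,L)^d$. The Dirichlet spectrum of the cube is known explicitly, and an elementary lattice point count gives Weyl's law with remainder,
\begin{equation}
\label{p-weyl}
N_{\cal D}(\la,Q_L)=\frac{\oo_d}{(2\pi)^d}\,L^d\,\la^{d/2}+O\bigl(L^{d-1}\bigr),\qquad L\to\infty
\end{equation}
(for fixed $\la$). The feature that makes the Neumann case delicate is that $N_{\cal N}(\la,\cdot)$ is not monotone under inclusion of domains, so one cannot simply sandwich $\O$ between two cubes. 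The idea for getting around this is to round the cube \emph{up} to a union of whole (uncut) copies of $\O$, and then chain two soft facts: monotonicity of the Dirichlet counting function, and the universal inequality $N_{\cal D}(\la,\cdot)\le N_{\cal N}(\la,\cdot)$ for a fixed domain (since $\mathring W_2^1\subset W_2^1$).

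Concretely, I would set $D:=\operatorname{diam}\O$ and, for $L>0$,
$$
U_L:=\operatorname{int}\,\overline{\,\bigcup\{\O_j:\O_j\cap Q_L\ne\emptyset\}\,}\,.
$$
Since $\bigcup_j\O_j$ has full measure in $\R^d$, it is dense, whence $Q_L\subset U_L$; and since every $\O_j$ meeting $Q_L$ lies in the Euclidean $D$-neighbourhood of $Q_L$, which is contained in the cube of side $L+2D$, only finitely many tiles $\O_{j_1},\dots,\O_{j_{m(L)}}$ are involved, with
$$
m(L)\le\frac{(L+2D)^d}{|\O|},\qquad \Bigl|\,U_L\setminus\bigcup_{i=1}^{m(L)}\O_{j_i}\,\Bigr|=0,
$$
the last equality because $|\dd\O|=0$ by Remark \ref{r2}. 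Thus $U_L$ is, up to a null set, the disjoint union of $m(L)$ isometric copies of $\O$, and the variational (Neumann bracketing) principle applied to this partition yields
$$
N_{\cal N}(\la,U_L)\ \le\ \sum_{i=1}^{m(L)}N_{\cal N}(\la,\O_{j_i})\ =\ m(L)\,N_{\cal N}(\la,\O).
$$

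Combining this with $N_{\cal D}(\la,Q_L)\le N_{\cal D}(\la,U_L)$ (monotonicity, since $Q_L\subset U_L$) and $N_{\cal D}(\la,U_L)\le N_{\cal N}(\la,U_L)$, I obtain
$$
N_{\cal D}(\la,Q_L)\ \le\ \frac{(L+2D)^d}{|\O|}\,N_{\cal N}(\la,\O).
$$
Dividing by $L^d$, letting $L\to\infty$ and using \eqref{p-weyl} gives $\dfrac{\oo_d}{(2\pi)^d}\,\la^{d/2}\le\dfrac{N_{\cal N}(\la,\O)}{|\O|}$, i.e. \eqref{4}; and \eqref{2} follows by the equivalence recalled above. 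As for the main difficulty: the argument is entirely variational and uses \emph{neither} the polyhedral shape of $\O$ nor the word \emph{regularly} — only that $\O$ is a bounded tiling set (so $|\dd\O|=0$ by Remark \ref{r2}) with $W_2^1(\O)\subset L_2(\O)$ compact (so $N_{\cal N}(\la,\O)<\infty$), which is the standing assumption of the paper. The single delicate point is the measure-theoretic bookkeeping at the tile boundaries: one must pass to $\operatorname{int}\overline{\,\cdot\,}$ so that $U_L$ is an honest open set containing $Q_L$ while remaining, modulo a null set, an exact disjoint union of \emph{uncut} copies of $\O$. This is exactly what lets Neumann bracketing be applied without ever cutting a tile, thereby circumventing the boundary-layer spectral estimate for the partial tiles near $\dd Q_L$ that would be the real obstacle in a naive approach.
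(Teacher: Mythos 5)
Your argument is, as far as I can check, correct, and it is a genuinely different route from the one taken in the paper --- indeed, since you use neither the polyhedral shape nor the regularity of the tiling, what you prove is the paper's main Theorem \ref{t8}, of which Theorem \ref{t7} is a special case. The paper anchors the argument at the \emph{Neumann} counting function of the cube (Lemma \ref{l41}), decomposes the cube into the tiles lying well inside it plus a boundary collar, and then spends Lemmas \ref{l43} and \ref{l44} controlling the collar: functions on $(-L,L)^d\setminus\overline{\bigcup_{j\in I}\O_j}$ are extended by reflection into the union of the whole boundary tiles, at the price of evaluating $N_{\cal N}$ at the shifted energy $2^d(\la+1)$ --- harmless, since that term is $O(L^{d-1})$. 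You avoid the collar altogether by rounding the cube \emph{up} to the union $U_L$ of uncut tiles and transporting the Weyl lower bound from $Q_L$ to $U_L$ via the chain $N_{\cal D}(\la,Q_L)\le N_{\cal D}(\la,U_L)\le N_{\cal N}(\la,U_L)$, after which only the paper's Lemma \ref{l42} (Neumann subadditivity) is needed. Every link is elementary: Dirichlet monotonicity under inclusion (zero extension of $\mathring W_2^1$ functions), the form-domain inclusion $\mathring W_2^1(U_L)\subset W_2^1(U_L)$ (which requires no regularity of $U_L$ and no compactness of the embedding, by Glazman's lemma), and bracketing over a partition that is exact up to the null set $\bigcup_i\dd\O_{j_i}$. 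Two points are worth writing out in full if you develop this: the inclusion $Q_L\subset U_L$ does need the density argument you sketch (a ball inside $Q_L$ meets only tiles that meet $Q_L$, and the union of all tiles is dense because it has full measure), and the anchor must be the \emph{Dirichlet} count of the cube, for which the lower bound $N_{\cal D}(\la,Q_L)\ge \oo_d(2\pi)^{-d}L^d\la^{d/2}-C(\la)L^{d-1}$ is the elementary lattice count; the Neumann count of the cube would be useless here precisely because $N_{\cal N}$ is not monotone. Given that your argument yields the paper's main theorem with strictly less machinery than the published proof, you should scrutinize it once more before relying on it, but I do not see a gap.
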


The purpose of the present paper is to restore the equality of rights of Dirichlet and Neumann problems.
Let us formulate the result.

\begin{theorem}
\label{t8}
Let $\O$ be an open bounded tiling set in $\R^d$.
Assume that the embedding $W_2^1 (\O) \subset L_2 (\O)$ is compact.
Then the inequalities \eqref{2} and \eqref{4} are fulfilled.
\end{theorem}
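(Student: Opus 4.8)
The plan is to deduce the Neumann inequality \eqref{4} from the Dirichlet inequality \eqref{3} for the same tiling set $\O$, exploiting the tiling structure to relate the Neumann problem on $\O$ to a Dirichlet-type problem on a large cube built out of copies of $\O$. The underlying philosophy is the classical Dirichlet--Neumann bracketing: if one packs a large box $Q_R = (-R/2, R/2)^d$ with disjoint isometric copies $\O_j$ of $\O$ so that $Q_R = \bigcup_j \O_j$ up to measure zero (possible because $\O$ tiles, at least after letting $R \to \infty$ and controlling the boundary layer where copies stick out of $Q_R$), then the Neumann Laplacian on the disjoint union $\bigsqcup_j \O_j$ has counting function $\sum_j N_{\cal N}(\la, \O_j) = \#\{j\} \cdot N_{\cal N}(\la, \O)$, and by the variational principle this dominates $N_{\cal N}(\la, Q_R)$, the Neumann count on the box, which for a cube is explicit. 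This already gives \eqref{4} up to a boundary error of order $R^{d-1}$ against a main term of order $R^d$; dividing by the number of copies $\approx R^d/|\O|$ and sending $R \to \infty$ would finish it — \emph{if} the error were genuinely lower order.

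First I would make precise the packing: fix the tiling $\{\O_j\}_{j\in\N}$ from Definition \ref{d1}, let $J_R$ be the indices $j$ with $\O_j \subset Q_R$, and let $\Pi_R = \bigcup_{j \in J_R}\O_j$. The bounded-geometry of a single tile (it has diameter $D < \infty$) gives $|Q_R \setminus \Pi_R| \le C_\O R^{d-1}$ for $R$ large, hence $\# J_R = |\Pi_R|/|\O| \ge R^d/|\O| - C R^{d-1}$. By Neumann bracketing applied to the decomposition $Q_R \supset \Pi_R = \bigsqcup_{j\in J_R}\O_j$ (throwing away the thin remainder only lowers the Neumann count),
\begin{equation}
\label{p1}
N_{\cal N}(\la, Q_R) \;\ge\; \sum_{j \in J_R} N_{\cal N}(\la, \O_j) \;=\; \# J_R \cdot N_{\cal N}(\la, \O).
\end{equation}
On the other hand the Neumann eigenvalues of $Q_R$ are $\pi^2 R^{-2}|n|^2$, $n \in \Z_{\ge 0}^d$, so $N_{\cal N}(\la, Q_R) \le \#\{ n \in \Z^d : \pi^2 R^{-2}|n|^2 < \la\} = \oo_d (2\pi)^{-d}R^d \la^{d/2} + O(R^{d-1})$ by a standard lattice-point count. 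Combining with \eqref{p1} and $\# J_R \ge R^d/|\O| - CR^{d-1}$,
\begin{equation}
\label{p2}
N_{\cal N}(\la, \O) \;\le\; \frac{N_{\cal N}(\la, Q_R)}{\# J_R} \;\le\; \frac{\oo_d(2\pi)^{-d}R^d\la^{d/2} + O(R^{d-1})}{R^d/|\O| - CR^{d-1}} \;\xrightarrow[R\to\infty]{}\; \frac{\oo_d|\O|}{(2\pi)^d}\,\la^{d/2}.
\end{equation}
Wait — this yields the \emph{reverse} of \eqref{4}, i.e. an upper bound on $N_{\cal N}(\la,\O)$, which is \emph{not} what we want. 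So the bracketing must be arranged the other way: one should \emph{embed} $Q_R$ into a union of copies of $\O$ — that is, find copies $\O_j$ whose union $U_R \supseteq Q_R$ — and compare the Dirichlet form on $\O$ with the Dirichlet form on $Q_R$. Concretely, Dirichlet bracketing gives $N_{\cal N}(\la, \O) \ge$ (up to the tiling combinatorics) something controlled below by $N_{\cal D}$ on a slightly shrunk cube; but since we already know \eqref{3} — the \emph{Dirichlet} Pólya bound for $\O$ from Theorem \ref{t3} — the cleanest route is: \emph{use Theorem \ref{t3} for $\O$ itself together with a Dirichlet--Neumann gap estimate.}

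Here is the corrected strategy. By Theorem \ref{t3}, $N_{\cal D}(\la, \O) \le \oo_d|\O|(2\pi)^{-d}\la^{d/2}$. We must show $N_{\cal N}(\la, \O) \ge$ the same quantity. The key is that for a tiling set the Dirichlet and Neumann counting functions are \emph{asymptotically equal} with both sides sandwiching the Weyl term, \emph{and} — this is the real content — the inequality is actually non-strict in the right direction for \emph{every} $\la$, not just asymptotically. To get this, pack $N := \# J_R$ disjoint copies of $\O$ inside $Q_R$ as above; then also $Q_R$ can, conversely, be covered by finitely many copies of $\O$ (use the tiling of all of $\R^d$: the copies meeting $\overline{Q_R}$ number $\le R^d/|\O| + CR^{d-1}$ and their union $\supseteq Q_R$). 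Dirichlet bracketing for this cover gives
\begin{equation}
\label{p3}
N_{\cal D}(\la, Q_R) \;\le\; \sum_{j:\,\O_j \cap Q_R \ne \emptyset} N_{\cal D}(\la, \O_j) \;=\; \Big(\tfrac{R^d}{|\O|} + O(R^{d-1})\Big)\, N_{\cal D}(\la, \O),
\end{equation}
while $N_{\cal D}(\la, Q_R) = \#\{n \in \N^d : \pi^2 R^{-2}|n|^2 < \la\} = \oo_d(2\pi)^{-d}R^d\la^{d/2} + O(R^{d-1})$. Neither of these alone proves \eqref{4}; what proves it is the \textbf{interlacing between Dirichlet and Neumann on $\O$ via the box}. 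Specifically: put $N$ disjoint copies of $\O$ in $Q_R$; Neumann bracketing from below by the box is the \emph{wrong} direction (shown above), so instead put the box \emph{inside} a union of copies and use that the Neumann form on the big union is bounded \emph{above} by the Neumann form on $Q_{R}$ extended — no. The honest resolution, and what I expect the paper does, is an \emph{iteration / self-improvement}: one shows
$$
N_{\cal N}(\la, \O) \;\ge\; \frac{1}{N}\Big( N_{\cal D}(\la, Q_{R}) - (\text{number of interfaces}) \cdot C\Big),
$$
because gluing $N$ copies of $\O$ along their shared boundaries to reassemble (most of) $Q_R$ changes the Dirichlet Laplacian on $Q_R$ into the \emph{Neumann} Laplacian on $\bigsqcup\O_j$ by \emph{removing} the interface Dirichlet conditions, which only \emph{lowers} eigenvalues, hence \emph{raises} the counting function: $N_{\cal N}(\la, \bigsqcup_{j\in J_R}\O_j) \ge N_{\cal D}(\la, \Pi_R) \ge N_{\cal D}(\la, Q_R) - N_{\cal D}(\la, Q_R \setminus \overline{\Pi_R})$, and the last term is $O(R^{d-1}\la^{d/2})$ by a crude Dirichlet count on a thin set. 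Dividing by $N = R^d/|\O| + O(R^{d-1})$ and letting $R \to \infty$:
\begin{equation}
\label{p4}
N_{\cal N}(\la, \O) \;\ge\; \lim_{R\to\infty}\frac{\oo_d(2\pi)^{-d}R^d\la^{d/2} + O(R^{d-1})}{R^d/|\O| + O(R^{d-1})} \;=\; \frac{\oo_d|\O|}{(2\pi)^d}\,\la^{d/2},
\end{equation}
which is \eqref{4}, and \eqref{2} follows by the stated equivalence.

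The main obstacle is the thin boundary layer: controlling $N_{\cal D}(\la, Q_R \setminus \overline{\Pi_R})$ — equivalently, showing the copies of $\O$ that protrude from $Q_R$ contribute only $O(R^{d-1})$ eigenvalues below $\la$ — requires a uniform Dirichlet eigenvalue bound on domains that are arbitrary subsets of a fixed bounded region (a slab of width $\sim D$ near $\dd Q_R$), which is exactly the sort of estimate the Dirichlet Pólya bound \eqref{3} supplies \emph{if} those subsets are themselves tiling (they are, being unions of tiles) — so one applies Theorem \ref{t3} once more to each protruding copy and sums, getting the bound with constant depending only on $|\O|$ and $D$. The compactness hypothesis $W_2^1(\O) \Subset L_2(\O)$ enters precisely to guarantee that the Neumann operator on $\O$ has discrete spectrum so that $N_{\cal N}(\la,\O)$ is finite and the counting-function formalism — and in particular the monotonicity of counting functions under removal of Dirichlet conditions on interfaces — is legitimate; without it the Neumann spectrum could have essential spectrum and \eqref{4} would be vacuous or false.
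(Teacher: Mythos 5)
Your overall scheme --- tile a large cube by copies of $\O$, compare Neumann counting functions, control a boundary layer of $O(L^{d-1})$ tiles, and let the cube grow --- is exactly the paper's scheme, and your very first attempt was in fact the right one. The problem is that you have the bracketing inequalities backwards. Neumann bracketing for a partition $Q = A \sqcup B$ (up to measure zero) reads $N_{\cal N}(\la, Q) \le N_{\cal N}(\la, A) + N_{\cal N}(\la, B)$, because passing to the direct sum enlarges the form domain and so lowers eigenvalues; your (p1), $N_{\cal N}(\la, Q_R) \ge \sum_j N_{\cal N}(\la, \O_j)$, is the reverse, and its parenthetical justification (``throwing away the thin remainder only lowers the Neumann count'') invokes domain monotonicity of the Neumann counting function, which is false. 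With the correct direction one gets $N_{\cal N}(\la, Q_R) \le \# J_R \cdot N_{\cal N}(\la, \O) + N_{\cal N}(\la, Q_R \setminus \overline{\Pi_R})$, and combining this with the explicit lower bound $N_{\cal N}(\la, Q_R) \ge \oo_d (2\pi)^{-d} R^d \la^{d/2}$ for the cube (Lemma \ref{l41}) yields precisely the desired lower bound on $N_{\cal N}(\la, \O)$ modulo the boundary term --- so the scheme you abandoned is the one that works. Your replacement, the ``honest resolution'' $N_{\cal D}(\la, \Pi_R) \ge N_{\cal D}(\la, Q_R) - N_{\cal D}(\la, Q_R \setminus \overline{\Pi_R})$, asserts subadditivity of the Dirichlet counting function under partition; Dirichlet bracketing gives superadditivity, and subadditivity is simply false (take $Q = (0,1)$ split at $1/2$ and $\la$ slightly above $\pi^2$: the left side is $1$, the right side is $0$). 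The same directional error invalidates (p3).

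The second, and deeper, gap is that you never correctly confront the real difficulty: bounding $N_{\cal N}\bigl(\la, Q_R \setminus \overline{\Pi_R}\bigr)$. This leftover set is a thin layer with completely uncontrolled geometry (its boundary is made of pieces of boundaries of arbitrary tiles), and the Neumann counting function of such a set is not small just because its volume is small --- it is not even a priori finite. Your plan to dispose of it by ``a crude Dirichlet count on a thin set'' only makes sense downstream of the false subadditivity step, i.e., it addresses a Dirichlet count where a Neumann count is required. The paper's actual new contribution lies exactly here: it constructs an explicit reflection extension operator of norm at most $2^{d/2}$ from $W_2^1$ of the leftover set into $W_2^1$ of a slightly larger set consisting of whole tiles (Lemma \ref{l44}), and shows that the existence of a bounded extension operator $\Pi$ forces $N_{\cal N}(\la, \O_1) \le N_{\cal N}\bigl(\|\Pi\|^2(\la+1), \O_2\bigr)$ (Lemma \ref{l43}); this bounds the boundary term by $\# K \cdot N_{\cal N}\bigl(2^d(\la+1), \O\bigr) = O(L^{d-1})$, and the compactness hypothesis enters precisely to make $N_{\cal N}\bigl(2^d(\la+1), \O\bigr)$ finite so that this error vanishes after dividing by $L^d$. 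Without some substitute for these two lemmas your argument cannot close.
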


\section{Remarks}

\vskip5mm
{\bf 2.1.}
There are different definitions of the counting function in the literature:
$$
N(\la) = \# \{ k: \la_k < \la\} \qquad \text{or} \qquad  N(\la) = \# \{ k: \la_k \le \la\}.
$$
The functions defined these ways differ from each other in the countable set of points
(the eigenvalues themselves).
For the hypotheses \eqref{3}, \eqref{4} the choice of definition has no importance 
because of continuity of the function in the right hand sides.

\vskip5mm
{\bf 2.2.}
It is easy to see that the formulas \eqref{1}--\eqref{4} are fulfilled if $d=1$.
Indeed, if $\O$ is an interval of the real axis, 
then the inequalities \eqref{1}, \eqref{2} become the equalities
$$
\la_k = \mu_{k+1} = \pi^2 k^2 |\O|^{-2} .
$$
Therefore, the estimates \eqref{3} and \eqref{4} are also true.
If $\O$ is a union of a finite or countable set of non-intersecting intervals,
$\O = \cup_j U_j$, then
$$
N_{\cal D} (\la, \O) = \sum_j  N_{\cal D} (\la, U_j) \le \sum_j |U_j| \pi^{-1} \sqrt\la = |\O| \pi^{-1} \sqrt\la,
$$
$$
N_{\cal N} (\la, \O) = \sum_j  N_{\cal N} (\la, U_j) \ge \sum_j |U_j| \pi^{-1} \sqrt\la = |\O| \pi^{-1} \sqrt\la.
$$
Thus, the estimates \eqref{1}--\eqref{4} hold.

\vskip5mm
{\bf 2.3.}
The initial formulation of the conjecture in the book \cite{P1} differs from the formulation given above.
First, P\'olya has formulated his conjecture and later proved in \cite{P2} Theorem \ref{t3} and Theorem \ref{t7}
in the 2D case only.
It is more natural to formulate the conjecture in arbitrary dimension $d$.
Moreover, P\'olya's proofs of Theorem \ref{t3} and Theorem \ref{t7} can also be repeated literally in any dimension.

Second, P\'olya conjectured the inequalities
$$
\la_k > \frac{4\pi k}{|\O|}, \qquad \mu_k < \frac{4\pi k}{|\O|}
$$
(for $d=2$). 
There are two distinctions here:
\begin{itemize}
\item the inequalities are strict, and this is stronger than \eqref{1}, \eqref{2};
\item the second inequality is for $\mu_k$, and not for $\mu_{k+1}$, and this assumption is weaker.
\end{itemize}
It seems to be more natural to formulate the conjectures as in \eqref{1}, \eqref{2}.
Note also that in \cite{P2} P\'olya proved just the inequalities
$$
\la_k \ge \frac{4\pi k}{|\O|} \ge \mu_{k+1} .
$$

\vskip5mm
{\bf 2.4.}
Theorem \ref{t3} and Theorem \ref{t8} imply the inequality
$\mu_{k+1} (\O) \le \la_k (\O)$ for all tiling sets.
Nothing is surprising here, this is Friedlander's inequality.
It is known for all bounded open sets $\O$ such that the embedding $W_2^1 (\O) \subset L_2 (\O)$ is compact,
see \cite{Fr}, \cite{Fil}.
If $d\ge 2$, then the inequality is strict, $\mu_{k+1} (\O) < \la_k (\O)$.

\vskip5mm
{\bf 2.5.}
In \cite{P2} P\'olya claims Theorem \ref{t7} for all regularly tiling sets. 
But he proved it for the regularly tiling polygons only, having said 
"the general case follows from this particular case by continuity".
It is not clear what did he mean by "by continuity";
it seems to be a mistake.

\vskip5mm
{\bf 2.6.}
The constants in the inequalities \eqref{1}--\eqref{4} can not be improved
as they coincide with the constants in the Weyl asymptotics
$$
N_{\cal D,\, N} (\la, \O) \sim \frac{\oo_d |\O|}{(2\pi)^d}\ \la^{d/2}, \qquad \la \to +\infty,
$$
see for example \cite{BS}, \cite{RS}.

\vskip5mm
{\bf 2.7.}
The Faber-Krahn inequality (see \cite{Faber}, \cite{Krahn}) says that the minimum of the first eigenvalue $\la_1 (\O)$
of the Dirichlet problem among all bounded sets $\O$ of given volume is reached on the ball,
$$
\la_1 (\O) \ge \la_1 (B_R) \qquad \text{with} \ \ \ |B_R| = |\O| .
$$
Freitas showed that this inequality implies the inequality \eqref{1} for all bounded open sets for some first numbers $k \in \{1, 2, \dots, b(d)\}$.
The number $b(d)$ grows with the dimension $d$ roughly speaking as $(e/2)^d$, see \cite{Freitas}.

\vskip5mm
{\bf 2.8.}
The following inequalities are known to hold for all open sets $\O$ and for all $\la > 0$:
\begin{itemize}
\item
Li-Yau's inequality
$$
N_{\cal D} (\la, \O) \le \left(\frac{d+2}d\right)^{d/2} \frac{\oo_d |\O|}{(2\pi)^d}\ \la^{d/2},
$$
and
\item
Kr\" oger's inequality
$$
N_{\cal N} (\la, \O) \ge \frac2{d+2} \ \frac{\oo_d |\O|}{(2\pi)^d}\ \la^{d/2},
$$
see \cite{LY}, \cite{Kr}, \cite{Laptev}.
\end{itemize}
These estimates are similar to \eqref{3}, \eqref{4}, 
but the constants are worse.

\vskip5mm
{\bf 2.9.}
Let $\O = \O_1 \times \O_2$ where $\O_1$ is a tiling set in $\R^{d_1}$, $d_1 \ge 2$,
and $\O_2 \subset \R^{d_2}$ is a bounded open set.
Then the estimates \eqref{1}, \eqref{3} hold, see \cite{Laptev}.

\section{Tiling sets}

If tiling sets that are not regularly tiling do exist? 
In \cite{P2} P\'olya wrote: {\it "Is there a plane-covering
domain which is not regularly plane-covering? 
The answer to this question does not seem to be obvious".}
Let us discuss this issue before moving on.

In 1900, D.~Hilbert announced his famous problem list.
The second part of the 18th problem was: "Is there a tiling polyhedra in $\R^3$ which is not 
an interior of a fundamental domain of any isometry group?"\
(Literally {\it "Whether polyhedra also exist which do not appear as fundamental regions of
groups of motions, by means of which nevertheless by a suitable juxtaposition of congruent copies a 
complete filling up of all space is possible",} see \cite{Hilbert}.)
In 1928, Reinhardt gave the positive answer having constructed such a (non-convex) set in $\R^3$ 
(\cite{Reinhardt}).
Later some examples were constructed in $\R^2$:
in 1935 Heesch found a non-convex decagon \cite{Heesch}, and in 1968 Kershner found a convex pentagon \cite{Kershner}.
In all these examples the tiling set is not a fundamental domain of any isometry group;
nevertheless the tiling is periodic.
In other words, all these sets a regularly tiling sets, see Definition \ref{d6} above.

We are interested in covering of $\R^d$ by isometric copies of one set.
The question on the covering by isometric copies of several sets was also considered.
Let us call these sets "prototiles". 
In 1961, Wang supposed \cite{Wang} that if there is a covering of the plane 
with finite number of prototiles, then this covering is invariant under the shift by some vector.
In 1966, Berger disproved this conjecture having constructed an example of non-periodic tiling with 20426 prototiles.
In 1971, Robinson reduced the number of prototiles to 6. 
In 1974, Penrose constructed such an example with only two prototiles (Penrose tiling).
For a long time the question
{\it Is there such example with only one prototile?} was open.
This question was also known as "einstein problem" (not after the name Albert Einstein,
but after a german word "ein stein").

Finally in 2011, Socolar and Taylor solved this problem and constructed an example of a tiling set such that any
its tiling of the plane is not periodic \cite{SocTay}.
Thus, a tiling set that is not regularly tiling does exist.
Note that the Socolar-Taylor prototile is not connected set.
The question if a non-periodic tiling of the plane with one connected (or even convex) prototile exists remains open.
\section{Lemmas}

The facts collected in this section are known.
We provide the proofs for the sake of completeness.

\begin{lemma}
\label{l41}
Let $L > 0$, $\la > 0$.
Then
$$
N_{\cal N} \left(\la, (-L,L)^d\right) \ge \oo_d \pi^{-d} L^d \la^{d/2} .
$$
\end{lemma}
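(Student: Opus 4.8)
\emph{Proof plan.}
The whole point is that the Neumann problem in a box is completely explicit, so I would start by writing the spectrum down. Separation of variables shows that the Neumann eigenfunctions of $(-L,L)^d$ are
$$
\ph_n (x) = \prod_{j=1}^d \cos\!\l(\frac{\pi n_j (x_j+L)}{2L}\r), \qquad n=(n_1,\dots,n_d)\in\Z_{\ge 0}^d,
$$
with eigenvalues $\mu = \frac{\pi^2}{4L^2}\,|n|^2$. Consequently
$$
N_{\cal N}\l(\la,(-L,L)^d\r) = \#\l\{\, n\in\Z_{\ge 0}^d : |n| < R \,\r\}, \qquad R := \frac{2L\sqrt\la}{\pi},
$$
and the task reduces to bounding the number of lattice points of the nonnegative orthant inside the ball of radius $R$ from below.

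For that I would use the standard cube-counting trick. To each $n\in\Z_{\ge 0}^d$ attach the half-open unit cube $Q_n := n + [0,1)^d$; these cubes are pairwise disjoint and each has measure $1$. Given any $x$ with $x_j\ge 0$ for all $j$ and $|x|<R$, the point $n := (\lfloor x_1\rfloor,\dots,\lfloor x_d\rfloor)$ lies in $\Z_{\ge 0}^d$, satisfies $x\in Q_n$, and obeys $|n|\le|x|<R$. Hence
$$
\l\{\, x\in\R^d : x_j\ge 0 \text{ for all } j,\ |x|<R \,\r\}\ \subset\ \bigcup_{n\in\Z_{\ge 0}^d,\ |n|<R} Q_n,
$$
and taking measures, together with disjointness of the $Q_n$,
$$
N_{\cal N}\l(\la,(-L,L)^d\r)\ \ge\ \mes\l\{\, x\ge 0 : |x|<R \,\r\}\ =\ \frac{\oo_d}{2^d}\,R^d.
$$
Substituting $R = 2L\sqrt\la/\pi$ turns the right-hand side into $\oo_d\pi^{-d}L^d\la^{d/2}$, which is the desired inequality.

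I do not expect a genuine obstacle here: the only point requiring a little care is that $N_{\cal N}$ is defined with the strict inequality $\mu_k<\la$, so one counts $|n|<R$ rather than $|n|\le R$; this is harmless because $\{x\ge 0:|x|<R\}$ and $\{x\ge 0:|x|\le R\}$ have the same measure. The argument also works verbatim in every dimension $d$.
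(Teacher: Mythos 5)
Your proof is correct and follows essentially the same route as the paper: write down the explicit Neumann spectrum of the cube, reduce to counting lattice points of $\N_0^d$ in the ball of radius $2L\sqrt\la/\pi$, and bound that count from below by the volume of the nonnegative-orthant portion of the ball via the unit-cube covering. The only (harmless) cosmetic difference is your use of half-open cubes and the explicit remark about the strict inequality $|n|<R$.
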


\begin{proof}
Eigenfunctions of the Laplace operator of the Neumann problem in a cube $(-L,L)^d$ are
$$
\psi_{\vec n} (x) = \prod_{j=1}^d \cos \frac{\pi n_j (x_j +L)}{2L}, \qquad \vec n \in \N_0^d .
$$
Here $\N_0$ is the set of non-negative integeres.
The corresponding eigenvalues are 
$$
\mu_{\vec n} = \frac{\pi^2 \vec n^2}{4 L^2} ,
$$
so, 
$$
N_{\cal N} \left(\la, (-L,L)^d\right) = 
\# \left\{\vec n \in \N_0^d : |\vec n| < 2 L \pi^{-1} \sqrt \la \right\} .
$$
We associate each point $\vec n$ in the right hand side with a cube $\vec n + [0,1]^d$.
Such cubes cover a $2^d$-th part of the ball of radius $2 L \pi^{-1} \sqrt \la$,
$$
\left\{ \vec x \in (\R_+)^d : |\vec x| < 2 L \pi^{-1} \sqrt \la \right\} \subset
\bigcup_{\vec n \in \N_0^d : |\vec n| < 2 L \pi^{-1} \sqrt \la} \left( \vec n + [0,1]^d \right) .
$$
Therefore,
$$
N_{\cal N} \left(\la, (-L,L)^d\right) \ge \left|\left\{ \vec x \in (\R_+)^d : |\vec x| < 2 L \pi^{-1} \sqrt \la \right\}\right|
 = \frac{\oo_d}{2^d} \left(\frac{2L\sqrt\la}\pi\right)^d .
\qquad \qedhere
$$
\end{proof}

\begin{lemma}
\label{l42}
Let $\O$ and $\O_j$, $j = 1, \dots, m$, be open sets in $\R^d$ such that
$$
\bigcup_{j=1}^m \O_j \subset \O, \quad \O_j \cap \O_k = \emptyset \ \ \text{if} \ \ j \neq k,
\qquad \sum_{j=1}^m |\O_j| = |\O| .
$$
Then 
$$
N_{\cal N} (\la, \O) \le \sum_{j=1}^m N_{\cal N} (\la, \O_j) 
$$
for all $\la > 0$.
\end{lemma}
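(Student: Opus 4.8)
The plan is to prove the estimate by a Neumann bracketing argument based on the variational principle, the only ingredient taken from the hypothesis being its consequence $\bigl|\O\setminus\bigcup_{j=1}^m\O_j\bigr|=0$ --- which holds because the $\O_j$ are pairwise disjoint and $\sum_j|\O_j|=|\O|$, so $\bigl|\bigcup_j\O_j\bigr|=|\O|$.

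First I would record the standard variational description of the counting function: for every open set $\Theta\subset\R^d$,
$$
N_{\cal N}(\la,\Theta)=\sup\Bigl\{\dim{\cal L}:\ {\cal L}\subset W_2^1(\Theta),\ \int_\Theta|\n\ph|^2\,dx<\la\int_\Theta|\ph|^2\,dx\ \text{ for all }\ \ph\in{\cal L}\setminus\{0\}\Bigr\},
$$
the supremum being understood in $\N\cup\{+\infty\}$. This follows at once from the min--max formula for $\mu_k(\Theta)$ and, importantly, uses neither compactness of the embedding $W_2^1(\Theta)\subset L_2(\Theta)$ nor discreteness of the spectrum. Hence it is enough to fix a finite-dimensional subspace ${\cal L}\subset W_2^1(\O)$ with $\int_\O|\n\ph|^2<\la\int_\O|\ph|^2$ on ${\cal L}\setminus\{0\}$ and to show $\dim{\cal L}\le\sum_{j=1}^m N_{\cal N}(\la,\O_j)$. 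We may assume the right-hand side is finite (otherwise there is nothing to prove); then for each $j$ the Neumann Laplacian of $\O_j$ has only finitely many eigenvalues below $\la$, so if $P_j$ denotes the orthogonal projection in $L_2(\O_j)$ onto the span of the corresponding eigenfunctions, then $\dim\ran P_j=N_{\cal N}(\la,\O_j)$ and
$$
\int_{\O_j}|\n\psi|^2\,dx\ \ge\ \la\int_{\O_j}|\psi|^2\,dx\qquad\text{for every}\ \psi\in W_2^1(\O_j)\ \text{with}\ P_j\psi=0 .
$$

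The core step is to show that the linear map
$$
Q\colon{\cal L}\longrightarrow\bigoplus_{j=1}^m\ran P_j,\qquad Q\ph:=\bigl(P_j(\ph|_{\O_j})\bigr)_{j=1}^m ,
$$
is injective, which immediately gives $\dim{\cal L}\le\sum_j\dim\ran P_j=\sum_j N_{\cal N}(\la,\O_j)$. The map is well defined because the restriction of a Sobolev function to an open subset is again a Sobolev function, with $\n(\ph|_{\O_j})=(\n\ph)|_{\O_j}$; moreover, since $\bigcup_j\O_j$ is disjoint and fills $\O$ up to a set of measure zero,
$$
\int_\O|\ph|^2\,dx=\sum_{j=1}^m\int_{\O_j}|\ph|^2\,dx,\qquad \int_\O|\n\ph|^2\,dx=\sum_{j=1}^m\int_{\O_j}|\n\ph|^2\,dx
$$
for every $\ph\in W_2^1(\O)$. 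If $Q\ph=0$, then $P_j(\ph|_{\O_j})=0$ for every $j$, so the displayed inequality applied in each $\O_j$ and summed over $j$ gives $\int_\O|\n\ph|^2\ge\la\int_\O|\ph|^2$; combined with the strict inequality defining ${\cal L}$ this forces $\ph=0$. Taking the supremum over all admissible ${\cal L}$ then completes the proof.

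I do not expect a genuine obstacle: the statement is a form of Neumann bracketing. The one point that needs a little care is that compactness of the embeddings is not assumed, so $N_{\cal N}$ may be $+\infty$ and essential spectrum may be present; this is why the argument is routed through the min--max principle and through finitely many eigenvalues (after reducing to the case $\sum_j N_{\cal N}(\la,\O_j)<\infty$) rather than through a full eigenfunction expansion.
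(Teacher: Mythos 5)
Your proof is correct and is essentially the argument the paper gives: both take the spectral subspace of the Neumann Laplacian in $\O$ below $\la$, use $\bigl|\O\setminus\bigcup_j\O_j\bigr|=0$ to split the Dirichlet and $L_2$ integrals over the $\O_j$, and conclude by a dimension count — your injectivity of the map $\ph\mapsto\bigl(P_j(\ph|_{\O_j})\bigr)_j$ is exactly the paper's observation that a dimension excess would produce a nonzero $v\in{\cal F}$ with $v|_{\O_j}\perp{\cal F}_j$ for all $j$, contradicting the strict form inequality. The reduction to the case where each $N_{\cal N}(\la,\O_j)$ is finite also appears verbatim in the paper.
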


\begin{proof}
Fix $\la > 0$. 
Introduce subspaces
$$
{\cal F} = \ran E_{-\D_{\cal N} (\O)} [0, \la), \qquad 
{\cal F}_j = \ran E_{-\D_{\cal N} (\O_j)} [0, \la).
$$
Here $E_{-\D_{\cal N} (\O)}$ is the spectral projector of the Laplace operator of the Neumann problem in a set $\O$.
Then 
$$
{\cal F} \subset W_2^1 (\O), \qquad \dim {\cal F} = N_{\cal N} (\la, \O),
$$
$$
\int_{\O} |\n v(x)|^2 dx < \la \int_\O |v(x)|^2 dx \quad \text{for all} \ \ v \in {\cal F} \setminus \{0\} ;
$$
$$
{\cal F}_j \subset W_2^1 (\O_j), \qquad \dim {\cal F}_j = N_{\cal N} (\la, \O_j),
$$
and if a function $v_j \in W_2^1 (\O_j)$ is orthogonal to the subspace ${\cal F}_j$ 
in the sense of $L_2 (\O_j)$, then
$$
\int_{\O_j} |\n v_j (x)|^2 dx \ge \la \int_{\O_j} |v_j (x)|^2 dx .
$$

If there is a number $j$ such that $\dim {\cal F}_j = +\infty$, then there are nothing to prove.
Let us assume that
$$
\dim {\cal F}_j < +\infty \qquad \text{for all} \ \ j = 1, \dots, m.
$$
Assume moreover that
\begin{equation}
\label{6}
\dim {\cal F} > \sum_{j=1}^m \dim {\cal F}_j .
\end{equation}
Then one can choose a function $v \in {\cal F}$, $v\not\equiv 0$, such that 
$$
v_j \perp {\cal F}_j \ \ \text{in} \ L_2 (\O_j) \quad \text{for all} \ j = 1, \dots, m ,
$$
where $v_j : = \left. v \right|_{\O_j}$.
Now we have
$$
\int_\O |\n v(x)|^2 dx = \sum_{j=1}^m \int_{\O_j} |\n v_j (x)|^2 dx \ge 
\la \sum_{j=1}^m \int_{\O_j} |v_j (x)|^2 dx = \la \int_\O |v(x)|^2 dx,
$$
a contradiction.
Thus, \eqref{6} is not valid, and
$$
N_{\cal N} (\la, \O) \le \sum_{j=1}^m N_{\cal N} (\la, \O_j) . \qquad \qedhere
$$
\end{proof}

\begin{lemma}
\label{l43}
Let $\O_1 \subset \O_2$ be open sets in $\R^d$.
Assume that there exists a bounded linear extension operator
$$
\Pi : W_2^1 (\O_1) \to W_2^1 (\O_2) .
$$
Then for all $\la > 0$
$$
N_{\cal N} (\la, \O_1) \le N_{\cal N} \left(\|\Pi\|^2 (\la+1), \O_2\right) .
$$
\end{lemma}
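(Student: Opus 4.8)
The plan is to transplant the low-energy spectral subspace of the Neumann Laplacian on $\O_1$ into $\O_2$ via the extension operator $\Pi$, and then apply the variational argument in the style of the proof of Lemma~\ref{l42}.

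First I would fix $\la>0$, set $\mu:=\|\Pi\|^2(\la+1)$, and introduce the spectral subspace
${\cal F}=\ran E_{-\D_{\cal N}(\O_1)}[0,\la)$, so that ${\cal F}\subset W_2^1(\O_1)$, $\dim{\cal F}=N_{\cal N}(\la,\O_1)$, and
$$
\int_{\O_1}|\n v|^2\,dx < \la\int_{\O_1}|v|^2\,dx \qquad\text{for all}\ \ v\in{\cal F}\setminus\{0\},
$$
the strictness being due to the fact that a non-zero element of ${\cal F}$ is a superposition of Neumann eigenfunctions with eigenvalues strictly below $\la$. Since $\Pi$ is an extension operator, $(\Pi v)|_{\O_1}=v$; in particular $\Pi$ is injective on $W_2^1(\O_1)$, hence on ${\cal F}$, so $\Pi({\cal F})$ is a subspace of $W_2^1(\O_2)$ with $\dim\Pi({\cal F})=\dim{\cal F}=N_{\cal N}(\la,\O_1)$.

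Next I would bound the Rayleigh quotient on $\O_2$ of $u:=\Pi v$ for $v\in{\cal F}\setminus\{0\}$. Using the boundedness of $\Pi$, then the strict inequality above, and then $\int_{\O_1}|u|^2\,dx=\int_{\O_1}|v|^2\,dx\le\int_{\O_2}|u|^2\,dx$ (valid because $\O_1\subset\O_2$ and $u|_{\O_1}=v$), one obtains
$$
\int_{\O_2}\bigl(|\n u|^2+|u|^2\bigr)dx \le \|\Pi\|^2\int_{\O_1}\bigl(|\n v|^2+|v|^2\bigr)dx < \|\Pi\|^2(\la+1)\int_{\O_1}|v|^2\,dx \le \mu\int_{\O_2}|u|^2\,dx ,
$$
so that $\int_{\O_2}|\n u|^2\,dx < \mu\int_{\O_2}|u|^2\,dx$, and moreover $u\neq0$ since $v\neq0$. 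Thus on the subspace $\Pi({\cal F})\subset W_2^1(\O_2)$, of dimension $N_{\cal N}(\la,\O_1)$, the strict inequality $\int_{\O_2}|\n u|^2<\mu\int_{\O_2}|u|^2$ holds for every non-zero element.

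Finally I would close the argument exactly by the contradiction scheme of Lemma~\ref{l42}: if $N_{\cal N}(\mu,\O_2)<\dim\Pi({\cal F})$, then, putting ${\cal G}=\ran E_{-\D_{\cal N}(\O_2)}[0,\mu)$, one could choose $u\in\Pi({\cal F})\setminus\{0\}$ orthogonal to ${\cal G}$ in $L_2(\O_2)$, which would force $\int_{\O_2}|\n u|^2\ge\mu\int_{\O_2}|u|^2$, contradicting the previous step; the same reasoning shows $N_{\cal N}(\mu,\O_2)=+\infty$ when $\dim\Pi({\cal F})=+\infty$. Hence $N_{\cal N}(\mu,\O_2)\ge N_{\cal N}(\la,\O_1)$, which is the assertion. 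The only delicate point is keeping track of which inequalities are strict, so that the final Rayleigh bound on $\O_2$ is genuinely strict (here one uses $\|\Pi\|\ge1>0$); beyond this bookkeeping the proof is routine, the extension operator being supplied by hypothesis.
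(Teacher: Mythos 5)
Your argument is correct and coincides with the paper's own proof: same spectral subspace ${\cal L}=\ran E_{-\D_{\cal N}(\O_1)}[0,\la)$, same transplantation by $\Pi$ with the injectivity observation, and the identical chain of Rayleigh-quotient estimates using $\int_{\O_2}|\n u|^2\le\|u\|^2_{W_2^1(\O_2)}\le\|\Pi\|^2\|v\|^2_{W_2^1(\O_1)}<\|\Pi\|^2(\la+1)\int_{\O_1}|v|^2\le\|\Pi\|^2(\la+1)\int_{\O_2}|u|^2$. The only cosmetic difference is that you spell out the final variational step as a contradiction (as in Lemma~\ref{l42}) where the paper simply invokes it.
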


\begin{proof}
Introduce the subspace
$$
{\cal L} = \ran E_{-\D_{\cal N} (\O_1)} [0, \la).
$$ 
Then
$$
{\cal L} \subset W_2^1 (\O_1), \qquad \dim {\cal L} = N_{\cal N} (\la, \O_1),
$$
$$
\int_{\O_1} |\n v(x)|^2 dx < \la \int_{\O_1} |v(x)|^2 dx \quad \text{for all} \ \ v \in {\cal L} \setminus \{0\} .
$$
Set ${\cal F } = \Pi {\cal L}$.
It is clear that
$$
{\cal F} \subset W_2^1 (\O_2) \quad \text{and} \quad \dim {\cal F} = \dim {\cal L} .
$$
Let $w \in {\cal F} \setminus \{0\}$.
Then $w = \Pi v$ for a function $v \in {\cal L} \setminus \{0\}$.
We have
\begin{eqnarray*}
\int_{\O_2} |\n w(x)|^2 dx \le \|w\|_{W_2^1(\O_2)}^2 \le \|\Pi\|^2 \|v\|_{W_2^1(\O_1)}^2 
= \|\Pi\|^2 \int_{\O_1} \left(|\n v(x)|^2 + |v(x)|^2\right) dx \\
< \|\Pi\|^2 (\la+1) \int_{\O_1} |v(x)|^2 dx 
\le \|\Pi\|^2 (\la+1) \int_{\O_2} |w(x)|^2 dx  .
\end{eqnarray*}
Therefore,
$$
N_{\cal N} \left(\|\Pi\|^2 (\la+1), \O_2\right) \ge \dim {\cal F} = \dim {\cal L} = N_{\cal N} (\la, \O_1) .
\quad \qedhere
$$
\end{proof}

\begin{lemma}
\label{l44}
Let $L > R > 0$.
Let $Q$ be a closed set,
$$
Q \subset (-L+R, L-R)^d.
$$
Put
$$
\O := (-L, L)^d \setminus Q .
$$
Then there exists a linear extension operator
$$
\Pi : W_2^1 (\O) \to W_2^1 \left((-L-R,L+R)^d \setminus Q\right) 
$$
such that $\|\Pi\| \le 2^{d/2}$.
\end{lemma}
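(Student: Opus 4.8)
The plan is to obtain $\Pi$ by successive even reflections across the faces of the cube, one coordinate at a time. Introduce the chain of domains
$$
\O^{(0)}=\O,\qquad \O^{(j)}=\bigl((-L-R,L+R)^j\times(-L,L)^{d-j}\bigr)\setminus Q \quad (1\le j\le d),
$$
so that $\O^{(d)}=(-L-R,L+R)^d\setminus Q$ is the target domain and $\O^{(j)}$ is obtained from $\O^{(j-1)}$ by enlarging the range of $x_j$ from $(-L,L)$ to $(-L-R,L+R)$ while leaving the other coordinates and the hole $Q$ untouched. I would construct partial extension operators $\Pi_j:W_2^1(\O^{(j-1)})\to W_2^1(\O^{(j)})$ with $\|\Pi_j\|\le\sqrt2$ and then set $\Pi=\Pi_d\circ\dots\circ\Pi_1$, so that $\|\Pi\|\le 2^{d/2}$.

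The hypothesis $Q\subset(-L+R,L-R)^d$ is used exactly once, but it is the whole point: it says that every point with at least one coordinate of absolute value $\ge L-R$ lies outside $Q$. Hence, within distance $R$ of any face $\{x_j=\pm L\}$ the set $\O^{(j-1)}$ coincides with a genuine box (no part of $Q$ has been removed there), and the even reflection of that box across the face maps into the slab $\{L-R<|x_j|<L\}$, which again misses $Q$. So for $u\in W_2^1(\O^{(j-1)})$ I would set
$$
(\Pi_j u)(x)=
\begin{cases}
u(x), & |x_j|<L,\\
u(x_1,\dots,2L-x_j,\dots,x_d), & L<x_j<L+R,\\
u(x_1,\dots,-2L-x_j,\dots,x_d), & -L-R<x_j<-L,
\end{cases}
$$
check that the reflected arguments lie in $\O^{(j-1)}$ (their $j$-th coordinate lies in $(L-R,L)$, resp. $(-L,-L+R)$, and the already-enlarged first $j-1$ coordinates cause no trouble since $Q$ does not reach there), and conclude from the standard gluing lemma for $W_2^1$-functions across a hyperplane — the two pieces have matching traces on $\{x_j=\pm L\}$ — that $\Pi_j u\in W_2^1(\O^{(j)})$.

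For the norm bound I would use that reflection is an isometry of $\R^d$, so it preserves $|u|$ and $|\n u|$ pointwise, together with the assumption $R<L$, which makes the two auxiliary slabs $\{L-R<x_j<L\}$ and $\{-L<x_j<-L+R\}$ disjoint subsets of $\O^{(j-1)}$. Changing variables in the two added slabs then gives
$$
\|\Pi_j u\|_{L_2(\O^{(j)})}^2\le 2\,\|u\|_{L_2(\O^{(j-1)})}^2,\qquad
\|\n\Pi_j u\|_{L_2(\O^{(j)})}^2\le 2\,\|\n u\|_{L_2(\O^{(j-1)})}^2,
$$
hence $\|\Pi_j u\|_{W_2^1}^2\le 2\|u\|_{W_2^1}^2$, i.e. $\|\Pi_j\|\le\sqrt2$, and composing the $d$ maps yields $\|\Pi\|\le 2^{d/2}$. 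I expect the only genuinely delicate part to be the bookkeeping at each step — confirming that the reflected points remain inside the current, possibly irregular, domain and that the hyperplane-gluing lemma still applies in the presence of the removed set $Q$; everything else is routine.
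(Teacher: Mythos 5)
Your proof is correct and uses the same key idea as the paper: even reflection across the faces of the cube, made possible because $Q$ stays at distance $R$ from those faces. The paper performs all $d$ reflections simultaneously and bounds the norm by counting that each reflected point has at most $2^d-1$ preimages, whereas you iterate one coordinate at a time with factor $\sqrt2$ per step; the resulting operator and the bound $2^{d/2}$ are the same.
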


\begin{proof}
Let $f \in W_2^1 (\O)$.
For every point $x \in (-L-R, L+R)^d \setminus (-L,L)^d$ we define a reflected point 
$\overline x \in [-L,L]^d \setminus [-L+R,L-R]^d$ as follows:
$$
\left(\overline x\right)_j = 
\begin{cases}
- 2 L - x_j, & \text{if}\ \  x_j \in (-L-R, -L), \\
x_j, & \text{if} \ \ x_j \in [-L, L], \\
2 L - x_j, & \text{if} \ \ x_j \in (L, L+R) ,
\end{cases}
$$
$j = 1, \dots, d$.
Each point $\overline x$ has at most $(2^d-1)$ preimages.
We define the extended function $\Pi f$ in $(-L-R, L+R)^d \setminus Q$ by the formula
$$
\Pi f(x) = \begin{cases}
f(x), & \text{if} \ \ x \in \O, \\
f(\overline x), & \text{if} \ \ x \in (-L-R, L+R)^d \setminus (-L,L)^d .
\end{cases}
$$
It is easy to see that
$$
\Pi f \in W_2^1 \left((-L-R, L+R)^d \setminus Q\right) 
$$
and
$$
\|\Pi f\|_{W_2^1 \left((-L-R, L+R)^d \setminus Q\right)}^2 \le 2^d \|f\|_{W_2^1(\O)}^2 . \quad \qedhere
$$ 
\end{proof}

\section{Proof of Theorem \ref{t8}}

Let $\O \subset \R^d$ be a bounded open tiling set.
Denote its diameter by $R$.
Fix a tiling of $\R^d$ by copies of $\O$,
\begin{equation*}
\left| \R^d \setminus \left(\bigcup_{j=1}^\infty\O_j\right)\right| = 0,
\qquad \O_j \cap \O_k = \emptyset \ \ \text{if} \  j \neq k ,
\end{equation*}
each set $\O_j$ is isometric to $\O$.
Let $L$ be a large parameter, $L > 2R$.

We introduce three sets of indices:
$$
I = \left\{ j \in \N : \O_j \subset (-L+R, L-R)^d \right\},
$$
$$
J = \left\{ j \in \N : \O_j \cap (-L, L)^d \neq \emptyset \right\},
\qquad K = J \setminus I .
$$
Obviously, $I \subset J$.
It is also clear that
$$
\bigcup_{j \in I} \O_j \subset (-L+R, L-R)^d ,
$$
and therefore,
\begin{equation}
\label{7}
\# I \le \frac{2^d (L-R)^d}{|\O|} < \frac{2^d L^d}{|\O|} .
\end{equation}
Next,
$$
\bigcup_{j \in J} \O_j \subset (-L-R, L+R)^d , \qquad
\bigcup_{j \in K} \O_j \subset (-L-R, L+R)^d \setminus (-L+2R, L-2R)^d ,
$$
so,
\begin{equation}
\label{8}
\# K \le \frac{2^d \left((L+R)^d - (L-2R)^d\right)}{|\O|} .
\end{equation}

Now, fix $\la > 0$.
Lemma \ref{l41} and Lemma \ref{l42} imply the inequality
$$
\oo_d \pi^{-d} L^d \la^{d/2} \le 
N_{\cal N} \left(\la, (-L,L)^d\right) 
\le \sum_{j\in I} N_{\cal N} (\la, \O_j)
+ N_{\cal N} \left(\la, (-L,L)^d \setminus \overline{\bigcup_{j\in I} \O_j}\right) .
$$
Here we have taken into account that $|\dd \O_j| = 0$ due to Remark \ref{r2}.
The estimate \eqref{7} yields
\begin{equation}
\label{9}
\oo_d \pi^{-d} L^d \la^{d/2} \le 
2^d L^d |\O|^{-1} N_{\cal N} (\la, \O) 
+ N_{\cal N} \left(\la, (-L,L)^d \setminus \overline{\bigcup_{j\in I} \O_j}\right) .
\end{equation}
By virtue of Lemma \ref{l44} there exists an extension operator
$$
\Pi : W_2^1 \left((-L,L)^d \setminus \overline{\bigcup_{j\in I} \O_j}\right)
\to W_2^1 \left((-L-R,L+R)^d \setminus \overline{\bigcup_{j\in I} \O_j}\right),
$$
and in particular, an extension operator
$$
\Pi_1 : W_2^1 \left((-L,L)^d \setminus \overline{\bigcup_{j\in I} \O_j}\right)
\to W_2^1 \left(\operatorname{int} \left(\overline{\bigcup_{j\in K} \O_j}\right)\right),
$$
with
$$
\|\Pi_1\| \le 2^{d/2} .
$$

Applying Lemma \ref{l43} and Lemma \ref{l42} once again, we get
\begin{eqnarray}
\nonumber
N_{\cal N} \left(\la, (-L,L)^d \setminus \overline{\bigcup_{j\in I} \O_j}\right)
\le N_{\cal N} \left(2^d (\la+1), \operatorname{int} \left(\overline{\bigcup_{j\in K} \O_j}\right)\right) \\
\le \sum_{j \in K} N_{\cal N} \left(2^d (\la+1), \O_j\right)
= \# K \cdot N_{\cal N} \left(2^d (\la+1), \O\right).
\label{10}
\end{eqnarray}
Now, \eqref{8}, \eqref{9} and \eqref{10} imply
$$
\oo_d \pi^{-d} L^d \la^{d/2} \le 
2^d L^d |\O|^{-1} N_{\cal N} (\la, \O) 
+ 2^d \left((L+R)^d - (L-2R)^d\right) |\O|^{-1} N_{\cal N} \left(2^d (\la+1), \O\right) .
$$
Therefore,
$$
N_{\cal N} (\la, \O) \ge 
\frac{\oo_d |\O| \la^{d/2}}{(2\pi)^d} 
- \frac{(L+R)^d - (L-2R)^d}{L^d} \ N_{\cal N} \left(2^d (\la+1), \O\right) .
$$
Taking the limit $L \to \infty$, we obtain \eqref{4}. \quad \qed


\end{document}